\documentclass[reqno]{article}
\usepackage{amssymb}
\usepackage{xcolor}
\usepackage{amsmath}
\usepackage{hyperref}
\hypersetup{
    colorlinks=true,
    linkcolor=blue,
    filecolor=magenta,      
    urlcolor=cyan,
}

\newtheorem{theorem}{Theorem}[section]

\newtheorem{proposition}[theorem]{Proposition}
\newtheorem{corollary}[theorem]{Corollary}

\newtheorem{remark}[theorem]{Remark}

\newenvironment{proof}{\noindent\textbf{Proof.}\ }{\hfill$\Box$}

\begin{document}

\title{Ideals with approximate unit in semicrossed products}
\author{Charalampos Magiatis}
\date{\today}

\maketitle 

\begin{abstract}
We characterize the ideals of the semicrossed product $C_0(X)\times_\phi\mathbb Z_+$ with left (resp. right) approximate unit.
\end{abstract}


\section{Introduction and Notation}

The semicrossed product is a non-selfadjoint operator algebra which is constructed from a dynamical system. We recall the construction of the semicrossed product we will consider in this work. Let $X$ be a locally compact Hausdorff space and $\phi:X\rightarrow X$ be a continuous and proper surjection (recall that a map $\phi$ is \emph{proper} if the inverse image $\phi^{-1}(K)$ is compact for every compact $K\subseteq X$). The pair $(X, \phi)$ is called a \emph{dynamical system}. An action of $\mathbb{Z}_+:=\mathbb N\cup \{0\}$ on $C_0(X)$ by isometric $*$-automorphisms $\alpha_n$, $n\in\mathbb{Z}_+$, is obtained by defining $\alpha_n(f)=f\circ\phi^n$. We write the elements of the Banach space $\ell^1(\mathbb Z_+,C_0(X))$ as formal series $A=\sum_{n\in\mathbb Z_+}U^nf_n$ with the norm given by $\|A\|_1=\sum_{n\in\mathbb Z_+}\|f_n\|_{C_0(X)}$.  Multiplication on $\ell^1(\mathbb Z_+,C_0(X))$ is defined by setting
\begin{equation*}
(U^nf)(U^mg)=U^{n+m}(\alpha^m(f)g)\ ,
\end{equation*}
and extending by linearity and continuity. With this multiplication $\ell^1(\mathbb Z_+,C_0(X))$ is a Banach algebra.

The Banach algebra $\ell^1(\mathbb Z_+,C_0(X))$ can be faithfully represented as a (concrete) operator algebra on a Hilbert space. This is achieved by assuming a faithful action of $C_0(X)$ on a Hilbert space $\mathcal{H}_0$. Then we can define a faithful contractive representation $\pi$ of $\ell_1(\mathbb Z_+,C_0(X))$ on the Hilbert space $\mathcal H=\mathcal{H}_0\otimes \ell^2(\mathbb Z_+)$ by defining $\pi(U^nf)$ as
\begin{equation*}
\pi(U^nf)(\xi\otimes e_k)=\alpha^k(f)\xi\otimes e_{k+n}\ .
\end{equation*}
The \emph{semicrossed product} $C_0(X)\times_{\phi}\mathbb Z_+$ is the closure of the image of $\ell^1(\mathbb Z_+,C_0(X))$ in $\mathcal{B(H)}$ in the representation just defined. We will denote  an element $\pi(U^nf)$ of $C_0(X)\times_{\phi}\mathbb Z_+$ by $U^nf$ to simplify the notation. 

For $A=\sum_{n\in\mathbb Z_+}U^nf_n\in \ell^1(\mathbb Z_+,C_0(X))$ we call $f_n\equiv E_n(A)$ the \emph{$n$th Fourier coefficient} of $A$. The maps $E_n:\ell^1(\mathbb Z_+,C_0(X))\rightarrow C_0(X)$ are contractive in the (operator) norm of $C_0(X)\times_{\phi}\mathbb Z_+$, and therefore they extend to contractions $E_n:C_0(X)\times_{\phi}\mathbb Z_+ \rightarrow C_0 (X)$.  An element $A$ of the semicrossed product $C_0(X)\times_{\phi}\mathbb Z_+$ is $0$ if and only if $E_n(A)=0$, for all $n \in \mathbb Z_+$, and thus $A$ is completely determined by its Fourier coefficients. We will denote $A$ by the formal series $A=\sum_{n\in\mathbb Z_+}U^nf_n$, where $f_n=E_n(A)$.  Note however that the series $\sum_{n\in\mathbb Z_+}U^nf_n$ does not in general converge to $A$ \cite[II.9 p.512]{peters}. The \emph{$k$th arithmetic mean} of $A$ is defined to be $\bar A_k=\frac{1}{k+1}\sum_{l=0}^k S_l(A)$, where $S_l(A)=\sum_{n=0}^l U^nf_n$. Then, the sequence $\{\bar A_k\}_{k\in\mathbb{Z}_+}$ is norm convergent to $A$ \cite[Remark p.524]{peters}. We refer to \cite{peters, dkm, dfk} for more information about the semicrossed product.

Let $\{X_n\}_{n=0}^{\infty}$ be a sequence of closed subsets of $X$ satisfying
\begin{align}\label{*}
X_{n+1}\cup\phi(X_{n+1})\subseteq X_n\ , \tag{$*$}
\end{align}
for all $n\in\mathbb N$. Peters proved in \cite{p} that there is a one-to-one correspondence between closed two-sided ideals $\mathcal I\subseteq C_0(X)\times_{\phi}\mathbb Z_+$ and sequences $\{X_n\}_{n=0}^{\infty}$ of closed subsets of $X$ satisfying (\ref{*}), under the additional assumptions that $X$ is metrizable and the dynamical system $(X,\phi)$ contains no periodic points. In fact, the ideal $\mathcal I$ associated with the sequence $\{X_n\}_{n=0}^{\infty}$ is $\mathcal I=\{A\in C_0(X)\times_{\phi}\mathbb Z_+:E_n(A)(X_n)=\{0\}\}$.  We will write this as $\mathcal I\sim\{X_n\}_{n=0}^{\infty}$. Moreover, under the above assumptions, he characterizes the maximal and prime ideals of the semicrossed product $C_0(X)\times_{\phi}\mathbb Z_+$.

Donsig, Katavolos and Manousos obtained in \cite{dkm} a characterization of the Jacobson radical for the semicrossed product $C_0(X)\times_{\phi}\mathbb Z_+$, where $X$ is a locally compact metrisable space and $\phi:X\rightarrow X$ is a continuous and proper surjection. Andreolas, Anoussis and the author characterized in \cite{aam2} the ideal generated by the compact elements and in \cite{aam1} the hypocompact and the scattered radical of the semicrossed product $C_0(X)\times_{\phi}\mathbb Z_+$, where $X$ is a locally compact Hausdorff space and $\phi:X\rightarrow X$ is a homeomorphism. All these ideals are of the form $\mathcal I\sim\{X_n\}_{n=0}^\infty$
for suitable families of closed subsets $\{X_n\}_{n=0}^\infty$.

In the present paper we characterize the closed two-sided ideals $\mathcal I\sim\{X_n\}_{n=0}^{\infty}$ of $C_0(X)\times_\phi\mathbb Z_+$ with left (resp. right) approximate unit. As a consequence, we obtain a complete characterization of ideals with left (resp. right) approximate unit under the additional assumptions that $X$ is metrizable and the dynamical system $(X,\phi)$ contains no periodic points.

Recall that a \emph{left} (resp. \emph{right}) \emph{approximate unit} of a Banach algebra $\mathcal A$ is a net $\{u_\lambda\}_{\lambda\in\Lambda}$ of elements of $\mathcal A$ such that:
\begin{enumerate}
\item for some positive number $r$, $\|u_{\lambda}\|\leq r$ for all $\lambda\in\Lambda$,
\item $\lim u_\lambda a=a$ (resp. $\lim au_\lambda =a$), for all $a\in\mathcal A$, in the norm topology of $\mathcal A$.
\end{enumerate}
A net which is both a left and a right approximate unit of $\mathcal A$ is called an \emph{approximate unit} of $\mathcal A$. A left (resp. right) approximate unit $\{u_\lambda\}_{\lambda\in\Lambda}$ that satisfies $\|u_{\lambda}\|\leq 1$ for all $\lambda\in\Lambda$ is called a \emph{contractive left} (resp. \emph{right}) \emph{approximate unit}.

We will say that an ideal $\mathcal I$ of a Banach algebra $\mathcal A$ has a left (resp. right) approximate unit if it has a left (resp. right) approximate unit as an algebra.


\section{Ideals with approximate unit}

In the following theorem the ideals $\mathcal I\sim\{X_n\}_{n=0}^{\infty}$ with right approximate unit are characterized.


\begin{theorem}\label{rau}
Let $\mathcal I\sim\{X_n\}_{n=0}^{\infty}$ be a non-zero ideal of $C_0(X)\times_\phi\mathbb Z_+$. The following are equivalent:
\begin{enumerate}
\item $\mathcal I$ has a right approximate unit.

\item $X_n=X_{n+1}$, for all $n\in\mathbb Z_+$.
\end{enumerate}
\end{theorem}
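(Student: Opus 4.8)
The plan is to prove both implications by reducing everything to the zeroth Fourier coefficient of the approximate unit, exploiting the fact that right multiplication by an element supported at level $0$ carries no $\phi$-twist. Concretely, if $\{u_\lambda\}$ is any net in $\mathcal I$ and $a=U^mf\in\mathcal I$ (so $f|_{X_m}=0$), then expanding $au_\lambda$ and collecting the coefficient at level $m$ should give the identity $E_m(au_\lambda)=f\cdot E_0(u_\lambda)$: the only contribution at level $m$ comes from the level-$0$ part of $u_\lambda$, on which $\alpha^0=\mathrm{id}$, so no twist appears. I would justify this first on the dense subalgebra of finite series (e.g. the arithmetic means $\bar A_k$), where the product expands termwise, and then extend by continuity of $E_m$ and of left multiplication by $a$. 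This single identity drives both directions.

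For (2)$\Rightarrow$(1), I would set $Y:=X_0=X_1=\cdots$ and take $\{h_\lambda\}$ to be a contractive approximate identity of the commutative $C^*$-algebra $I_Y:=\{f\in C_0(X):f|_Y=0\}\cong C_0(X\setminus Y)$, which exists since $X\setminus Y$ is locally compact Hausdorff, and then put $u_\lambda:=U^0h_\lambda$. Each $u_\lambda$ lies in $\mathcal I$ because $E_0(u_\lambda)=h_\lambda$ vanishes on $X_0=Y$ and all higher coefficients vanish, and $\|u_\lambda\|\le\|h_\lambda\|_\infty\le1$. On a finite series $a=\sum_{n=0}^N U^nf_n\in\mathcal I$ one has $U^nf_n\cdot U^0h_\lambda=U^n(f_nh_\lambda)$ with no twist, so $\|au_\lambda-a\|\le\sum_{n=0}^N\|f_nh_\lambda-f_n\|_\infty\to0$ because each $f_n\in I_Y$; uniform boundedness then upgrades this to $au_\lambda\to a$ for all $a\in\mathcal I$ by the standard density argument, so $\{u_\lambda\}$ is a (contractive) right approximate unit.

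For (1)$\Rightarrow$(2), suppose $\{u_\lambda\}$ is a right approximate unit and write $h_\lambda:=E_0(u_\lambda)$, which vanishes on $X_0$ since $u_\lambda\in\mathcal I$. Assume some inclusion is strict, say $X_{n_0}\supsetneq X_{n_0+1}$ (recall the sets decrease, $X_{n+1}\subseteq X_n$, by condition ($*$)). Pick $x\in X_{n_0}\setminus X_{n_0+1}$ and use Urysohn's lemma on $X$ to get $f\in C_0(X)$ with $f(x)=1$ and $f|_{X_{n_0+1}}=0$. Then $a:=U^{n_0+1}f\in\mathcal I$, and the coefficient identity gives $E_{n_0+1}(au_\lambda)=f\,h_\lambda$. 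Since $x\in X_{n_0}\subseteq X_0$ we have $h_\lambda(x)=0$, so $|(fh_\lambda-f)(x)|=1$ for every $\lambda$; by contractivity of $E_{n_0+1}$ this forces $\|au_\lambda-a\|\ge1$, contradicting $au_\lambda\to a$. Hence $X_n=X_{n+1}$ for all $n$.

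The main obstacle is the first step: making the Fourier-coefficient computation rigorous in a setting where $\sum_n U^nf_n$ need not converge to $A$. I expect to handle it by verifying $E_m(au_\lambda)=f\,E_0(u_\lambda)$ on finite series and passing to the limit via density of the arithmetic means together with continuity of $E_m$ and of multiplication. A secondary point worth isolating is the asymmetry with the left case: computing $u_\lambda a$ instead would yield the level-$m$ coefficient $(h_\lambda\circ\phi^m)f$, a $\phi$-twisted expression, which is precisely why the right-handed condition collapses to the untwisted equalities $X_n=X_{n+1}$ rather than to a dynamical condition.
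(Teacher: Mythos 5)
Your proposal is correct and follows essentially the same route as the paper: the obstruction in (1)$\Rightarrow$(2) is exhibited by the same element $U^{n_0+1}f$ with $f$ peaking on $X_{n_0}\setminus X_{n_0+1}$ and vanishing on $X_{n_0+1}$, using that $E_0(u_\lambda)$ vanishes on $X_0\supseteq X_{n_0}$; and in (2)$\Rightarrow$(1) the paper also takes $U^0u_\lambda$ with $\{u_\lambda\}$ a contractive approximate unit of $C_0(X\setminus X_0)$ and runs the same density argument via the arithmetic means $\bar A_k$. Your extra care in justifying $E_m(au_\lambda)=f\,E_0(u_\lambda)$ on finite series and extending by continuity is a point the paper passes over silently, but it is the same proof.
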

\begin{proof}
We start by proving that (1) $\Rightarrow$ (2). Let $\mathcal I\sim\{X_n\}_{n=0}^{\infty}$ be an ideal with right approximate unit $\{V_{\lambda}\}_{\lambda\in\Lambda}$. We suppose that there exists $n\in\mathbb Z_+$ such that $X_{n+1}\subsetneq X_{n}$. Let
\begin{equation*}
n_0=\min\{n\in\mathbb Z_+:X_{n+1}\subsetneq X_{n}\}\ ,
\end{equation*}
$x_0\in X_{n_0}\setminus X_{n_0+1}$ and $f\in C_0(X)$ such that $f(x_0)=1$, $f(X_{n_0+1})=\{0\}$ and $\|f\|=1$. Then, for $A=U^{n_0+1}f$, we have
\begin{equation*}
\|AV_{\lambda}-A\| \ge \|E_{n_0+1}(AV_{\lambda}-A)\| = \|fE_0(V_{\lambda})-f\|\ge|(fE_0(V_{\lambda})-f)(x_0)|=1\ ,
\end{equation*}
for all $\lambda\in\Lambda$, since $x_0\in X_{n_0}$ and $E_0(V_{\lambda})(X_{n_0})=0$, which is a contradiction. Therefore $X_n=X_{n+1}$ for all $n\in\mathbb Z_+$.

For (2) $\Rightarrow$ (1), assume that $X_n=X_{n+1}$ for all $n\in\mathbb Z_+$. By \eqref{*}, we get that $\phi(X_0)\subseteq X_0$. We will show that if $\{u_{\lambda}\}_{\lambda\in\Lambda}$ is a contractive approximate unit of the ideal $C_0(X\setminus X_0)$ of $C_0(X)$, then $\{U^0u_{\lambda}\}_{\lambda\in\Lambda}$ is a right approximate unit of $\mathcal I$. Since $\|u_{\lambda}\|\leq 1$, we have $\|U^0u_{\lambda}\|\leq 1$.

Let $A\in\mathcal I$ and $\varepsilon>0$. Then there exists $k\in\mathbb Z_+$ such that
\begin{equation*}
\|A-\bar A_k\|<\frac{\varepsilon}{4}\ ,
\end{equation*}
where $\bar A_k$ is the $k$th arithmetic mean of $A$. Since $X_n=X_0$, $E_n(\bar A_k)\in C_0(X\setminus X_0)$ and $\{u_{\lambda}\}_{\lambda\in\Lambda}$ is an approximate unit of $C_0(X\setminus X_0)$, there exists $\lambda_0\in\Lambda$ such that
\begin{equation*}
\|E_l(\bar A_k)u_\lambda-E_l(\bar A_k)\|<\frac{\varepsilon}{2(k+1)}\ ,
\end{equation*}
for all $l\leq k$ and $\lambda> \lambda_0$. So, for $\lambda>\lambda_0$ we get that
\begin{eqnarray*}
\|AU^0u_\lambda-A\| & < & \|AU^0u_\lambda-\bar A_kU^0u_\lambda+\bar A_kU^0u_\lambda-\bar A_k+\bar A_k-A\| \\
& < & \|AU^0u_\lambda-\bar A_kU^0u_\lambda\|+\|\bar A_kU^0u_\lambda-\bar A_k\|+\|A-\bar A_k\| \\
& < & \|\bar A_kU^0u_\lambda-\bar A_k\|+\frac{\varepsilon}{2}\\
&\leq & \sum_{l=0}^k\|E_l(\bar A_k)u_\lambda -E_l(\bar A_k)\|+\frac{\varepsilon}{2}\\
&<& \varepsilon\ ,
\end{eqnarray*}
which concludes the proof.
\end{proof}


In the following theorem the ideals $\mathcal I\sim\{X_n\}_{n=0}^{\infty}$ with left approximate unit are characterized.


\begin{theorem}\label{lau}
Let $\mathcal I\sim\{X_n\}_{n=0}^{\infty}$ be a non-zero ideal of $C_0(X)\times_\phi\mathbb Z_+$. The following are equivalent:
\begin{enumerate}
\item $\mathcal I$ has a left approximate unit.

\item $X_0\subsetneq X$ and $\phi^{n}(X\setminus X_n)= X\setminus X_0$, for all $n\in\mathbb Z_+$.

\item $\phi(X\setminus X_{1})= X\setminus X_{0}$ and $\phi(X_{n+1}\setminus X_{n+2})=X_{n}\setminus X_{n+1}$, for all $n\in\mathbb Z_+$.
\end{enumerate}
\end{theorem}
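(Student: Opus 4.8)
The plan is to establish the full set of equivalences through the three steps (1)$\Rightarrow$(2), (2)$\Leftrightarrow$(3) and (2)$\Rightarrow$(1), following the pattern of Theorem~\ref{rau} but accounting for the fact that \emph{left} multiplication twists the Fourier coefficients. The computation underlying everything is that for $V=\sum_m U^m g_m$ and $A=U^n f$ one has $VA=\sum_m U^{m+n}\bigl((g_m\circ\phi^n)f\bigr)$, so that $E_n(VA)=(E_0(V)\circ\phi^n)\,f$; since each $E_j$ is contractive, control of $VA-A$ reduces to control of these twisted products coefficient by coefficient.

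For (1)$\Rightarrow$(2), suppose $\{V_\lambda\}_{\lambda\in\Lambda}$ is a left approximate unit. To see $X_0\subsetneq X$ I argue as in Theorem~\ref{rau}: if $X_0=X$, then $E_0(V_\lambda)$ vanishes on $X_0=X$, hence $E_0(V_\lambda)=0$ for every $\lambda$; taking the least $n$ with $X_n\neq X$ (which exists since $\mathcal I\neq 0$) and $A=U^{n}f$ with $f(X_n)=\{0\}$, $\|f\|=1$, I get $E_n(V_\lambda A)=(E_0(V_\lambda)\circ\phi^n)f=0$, so $\|V_\lambda A-A\|\ge\|f\|=1$, a contradiction. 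For the inclusion $\phi^n(X\setminus X_n)\subseteq X\setminus X_0$, fix $x\notin X_n$ and choose $f\in C_0(X)$ with $f(x)=1$ and $f(X_n)=\{0\}$; then $U^n f\in\mathcal I$, and evaluating $E_n(V_\lambda U^nf)=(E_0(V_\lambda)\circ\phi^n)f$ at $x$ forces $E_0(V_\lambda)(\phi^n(x))\to 1$, which is impossible if $\phi^n(x)\in X_0$ because $E_0(V_\lambda)$ vanishes on $X_0$. The reverse inclusion $X\setminus X_0\subseteq\phi^n(X\setminus X_n)$ needs no hypothesis: from \eqref{*} one gets $\phi^n(X_n)\subseteq X_0$, so surjectivity of $\phi^n$ gives any $y\notin X_0$ a preimage that must lie in $X\setminus X_n$.

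For (2)$\Leftrightarrow$(3) I plan to pass through the clean reformulation $X_{n+1}=\phi^{-1}(X_n)$ for all $n$. Granting (2), \eqref{*} already gives $X_{n+1}\subseteq\phi^{-1}(X_n)$, while the argument above (if $\phi(x)\in X_n$ for some $x\notin X_{n+1}$, then $\phi^{n+1}(x)\in\phi^n(X_n)\subseteq X_0$, contradicting $\phi^{n+1}(x)\in X\setminus X_0$) yields the opposite inclusion, so $X_{n+1}=\phi^{-1}(X_n)$. Condition (3) then drops out by taking preimages: $X_{n+1}\setminus X_{n+2}=\phi^{-1}(X_n\setminus X_{n+1})$ together with surjectivity gives $\phi(X_{n+1}\setminus X_{n+2})=X_n\setminus X_{n+1}$, and likewise $\phi(X\setminus X_1)=X\setminus X_0$. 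Conversely, writing the complements as unions of the layers $X_k\setminus X_{k+1}$, condition (3) telescopes to $\phi(X\setminus X_{n+1})=X\setminus X_n$, which iterates to the equalities in (2); the strictness $X_0\subsetneq X$ follows because $X_0=X$ would propagate through (3) to force every $X_n=X$ and hence $\mathcal I=0$.

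Finally, (2)$\Rightarrow$(1) is where the real work lies. Taking a contractive approximate unit $\{u_\lambda\}_{\lambda\in\Lambda}$ of the commutative $C^*$-algebra $C_0(X\setminus X_0)$, I propose $V_\lambda=U^0u_\lambda$; these lie in $\mathcal I$ (as $u_\lambda$ vanishes on $X_0$) and satisfy $\|V_\lambda\|=\|u_\lambda\|\le 1$. The crux is the twisted approximation statement: for each $n$ and each $f\in C_0(X)$ with $f(X_n)=\{0\}$ one has $\|(u_\lambda\circ\phi^n)f-f\|\to 0$. I would prove this by splitting the supremum over the compact set $\{|f|\ge\varepsilon/4\}\subseteq X\setminus X_n$ and its complement: on the former, $\phi^n$ maps into the compact subset $\phi^n(\{|f|\ge\varepsilon/4\})$ of $X\setminus X_0$ (here condition (2) is exactly what is needed), and every approximate unit of $C_0(X\setminus X_0)$ converges to $1$ uniformly on compacta; on the latter $|f|$ is already small while $|u_\lambda\circ\phi^n-1|\le 2$. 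Granting this, the proof finishes exactly as in Theorem~\ref{rau}: approximate $A$ by an arithmetic mean $\bar A_k$, a finite sum $\sum_{l\le k}U^lE_l(\bar A_k)$ with $E_l(\bar A_k)\in C_0(X\setminus X_l)$, expand $V_\lambda\bar A_k-\bar A_k=\sum_{l\le k}U^l\bigl((u_\lambda\circ\phi^l)E_l(\bar A_k)-E_l(\bar A_k)\bigr)$, and bound each of the finitely many twisted differences. I expect the main obstacle to be precisely this uniform-convergence-with-a-twist estimate, since the factor $\phi^n$ inside $u_\lambda$ is what makes the left case genuinely harder than the right case of Theorem~\ref{rau}, and it is there that hypothesis (2) is used in an essential way.
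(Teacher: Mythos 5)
Your proof is correct and follows essentially the same route as the paper's: the forward direction via the twisted Fourier coefficient identity $E_n(VU^nf)=(E_0(V)\circ\phi^n)f$ applied to suitable Urysohn functions, and the converse via arithmetic means together with uniform convergence of the approximate unit of $C_0(X\setminus X_0)$ on the compact sets $\phi^l\bigl(\{|E_l(\bar A_k)|\ge \varepsilon/4(k+1)\}\bigr)$, exactly as in the paper. The only cosmetic differences are that you route (2)$\Leftrightarrow$(3) through the identity $X_{n+1}=\phi^{-1}(X_n)$ rather than the paper's minimal-counterexample argument, and that you spell out (where the paper is terse) why, under (3), the assumption $X_0=X$ propagates to $X_n=X$ for all $n$ and hence to $\mathcal I=\{0\}$.
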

\begin{proof}
We start by proving that (1) $\Rightarrow$ (2). Let $\mathcal I\sim\{X_n\}_{n=0}^{\infty}$ be an ideal with left approximate unit $\{V_{\lambda}\}_{\lambda\in\Lambda}$. 

First we prove that $X_0\subsetneq X$. We suppose that $X_0=X$. Then $E_0(V_\lambda)=0$, for all $\lambda\in\Lambda$, and hence for every $U^nf\in\mathcal I$ we have
\begin{equation*}
\|V_\lambda U^nf-U^nf\|\ge\|E_n(V_\lambda U^nf-U^nf)\|=\|E_0(V_\lambda)f-f\|=\|f\|\ ,
\end{equation*}
for all $\lambda\in\Lambda$, which is a contradiction. Therefore $X_0\subsetneq X$.

Now we prove that $\phi^{n}(X\setminus X_n)= X\setminus X_0$, for all $n\in\mathbb Z_+$. We suppose that there exists $n\in\mathbb Z_+$ such that $\phi^{n}(X\setminus X_n)\not\subseteq X\setminus X_0$ and let
\begin{equation*}
n_0=\min\{n\in\mathbb Z_+:\phi^{n}(X\setminus X_n)\not\subseteq X\setminus X_0\}\ .
\end{equation*}
Then, there exist $x_0\in X\setminus X_{n_0}$ such that $\phi^{n_0}(x_0)\in X_{0}$ and a function $f\in C_0(X)$ such that $f(x_0)=1$, $f(X_{n_0})=\{0\}$ and $\|f\|=1$. If $A=U^{n_0}f$, we have that $A\in\mathcal I$, $\|A\|=1$ and
\begin{eqnarray*}
\|V_{\lambda}A-A\| & \ge & \|E_{n_0}(V_{\lambda}A-A)\|\\
& = & \|E_0(V_{\lambda})\circ\phi^{n_0}f-f\|\\
&\ge&|(E_0(V_{\lambda})\circ\phi^{n_0}f-f)(x_0)|\\
&=&1\ ,
\end{eqnarray*}
for all $\lambda\in\Lambda$, since $\phi^{n_0}(x_0)\in X_0$ and $E_0(V_{\lambda})(X_0)=\{0\}$, which is a contradiction. Therefore $\phi^{n}(X\setminus X_n)\subseteq X\setminus X_0$. Furthermore, by \eqref{*} we get that $\phi^n(X_n)\subseteq X_0$, for all $n\in\mathbb Z_+$, and hence
\begin{equation*}
X  =  \phi^n(X)
=\phi^n(X_n\cup(X\setminus X_n))
 =  \phi^n(X_n)\cup\phi^n(X\setminus X_n)
\subseteq X_0\cup \phi^n(X\setminus X_n)\ .
\end{equation*}
Since $\phi^{n}(X\setminus X_n)\subseteq X\setminus X_0$ and $\phi$ is surjective, $\phi^n(X\setminus X_n)=X\setminus X_0$, for all $n\in\mathbb Z_+$.

For (2) $\Rightarrow$ (1), assume that $X_0\subsetneq X$ and $\phi^{n}(X\setminus X_n)= X\setminus X_0$, for all $n\in\mathbb Z_+$. We will show that if $\{u_{\lambda}\}_{\lambda\in\Lambda}$ is a contractive approximate unit of the ideal $C_0(X\setminus X_0)$ of $C_0(X)$, then $\{U^0u_{\lambda}\}_{\lambda\in\Lambda}$ is a left approximate unit of $\mathcal I$. Since $\|u_{\lambda}\|\leq 1$, we have $\|U^0u_{\lambda}\|\leq 1$.

Let $A$ be a norm-one element of $\mathcal I$ and $\varepsilon>0$. Then there exists $k\in\mathbb Z_+$ such that
\begin{equation*}
\|A-\bar A_k\|<\frac{\varepsilon}{4}\ ,
\end{equation*}
where $\bar A_k$ is the $k$th arithmetic mean of $A$. For $l\leq k$, let 
\begin{equation*}
D_\varepsilon(E_l(\bar A_k))=\left\{x\in X: |E_l(\bar A_k)(x)|\ge\frac{\varepsilon}{4(k+1)} \right\}\ .
\end{equation*}
Since $A\in\mathcal I$, we have $E_l(\bar A_k)(X_l)=\{0\}$ and hence $D_\varepsilon(E_l(\bar A_k))\subseteq X\setminus X_l$. Furthermore, since $\phi^{n}(X\setminus X_n)= X\setminus X_0$, for all $n\in\mathbb Z_+$, we have that $\phi^{l}(D_\varepsilon(E_l(\bar A_k)))\subseteq X\setminus X_0$. Moreover, the set $D_\varepsilon(E_l(\bar A_k))$ is compact and hence the set $\phi^{l}(D_\varepsilon(E_l(\bar A_k)))$ is also compact. By Urysohn's lemma, there is a norm-one function $v_l\in C_0(X)$ such that
\begin{equation*}
v_l(x)=\left\{\begin{tabular}{ll} $1$, & $x\in \phi^l(D_\varepsilon(E_l(\bar A_k)))$\\ $0$, & $x\in X_0$ \end{tabular}\right..
\end{equation*}
Then, there exists $\lambda_0\in\Lambda$ such that
\begin{equation*}
\|u_\lambda v_l-v_l\|<\frac{\varepsilon}{2(k+1)}\ ,
\end{equation*}
for all $l\leq k$ and $\lambda>\lambda_0$, and hence
\begin{equation*}
|u_\lambda(x)-1|<\frac{\varepsilon}{2(k+1)}\ ,
\end{equation*}
for all $x\in\cup_{l=0}^k\phi^{l}(D_\varepsilon(E_l(\bar A_k)))$ and $\lambda>\lambda_0$. Therefore, if $x\in\cup_{l=0}^k(D_\varepsilon(E_l(\bar A_k)))$ then $\phi^l(x)\in\cup_{l=0}^k\phi^{l}(D_\varepsilon(E_l(\bar A_k)))$ and hence
\begin{equation*}
\|((u_\lambda\circ\phi^l) E_l(\bar A_k)-E_l(\bar A_k))(x)\|<\frac{\varepsilon}{2(k+1)}\ ,
\end{equation*}
for all $l\leq k$ and $\lambda> \lambda_0$. On the other hand, if $x\not\in\cup_{l=0}^k(D_\varepsilon(E_l(\bar A_k)))$, then
\begin{equation*}
|E_l(\bar A_k)(x)|<\frac{\varepsilon}{4(k+1)}\ ,
\end{equation*}
for all $l\leq k$, and hence 
\begin{equation*}
\|((u_\lambda\circ\phi^l) E_l(\bar A_k)-E_l(\bar A_k))(x)\|<\frac{\varepsilon}{2(k+1)}\ .
\end{equation*}

From what we said so far we get that
\begin{eqnarray*}
\|U^0u_\lambda A-A\| & < & \|U^0u_\lambda\bar A_k-\bar A_k\|+\frac{\varepsilon}{2}\\
& \leq & \sum_{l=0}^k\|(u_\lambda\circ\phi^l)E_l(\bar A_k)-E_l(\bar A_k)\|+\frac{\varepsilon}{2}\\
&<& \varepsilon\ ,
\end{eqnarray*}
for all $\lambda>\lambda_0$.

Now we show that (2) $\Rightarrow$ (3). We assume that $\phi^{n}(X\setminus X_{n})= X\setminus X_0$, for all $n\in\mathbb Z_+$. Then $\phi(X\setminus X_{n+2})\subseteq X\setminus X_{n+1}$. Indeed, if $x\in X\setminus X_{n+2}$ and $\phi(x)\in X_{n+1}$ then $\phi^{n+2}(x)\in X_{0}$, by \eqref{*}, which is a contradiction. Furthermore, by \eqref{*}, we know that $\phi(X_{n+1})\subseteq X_{n}$ and hence $\phi(X_{n+1}\setminus X_{n+2})\subseteq X_n\setminus X_{n+1}$ for all $n\in\mathbb Z_+$.

To prove that $\phi(X_{n+1}\setminus X_{n+2})= X_n\setminus X_{n+1}$ for all $n\in\mathbb Z_+$, we suppose that there exists $n\in\mathbb Z_+$ such that $\phi(X_{n+1}\setminus X_{n+2})\subsetneq X_n\setminus X_{n+1}$. If
\begin{equation*}
n_0 = \min\{n\in\mathbb Z_+:\phi(X_{n+1}\setminus X_{n+2})\subsetneq X_n\setminus X_{n+1}\}\ ,
\end{equation*}
then
\begin{eqnarray*}
\phi(X) & = & \phi(X_{n_0+1}\cup(X\setminus X_{n_0+1}))\\
&=& \phi(X_{n_0+1})\cup\phi(X\setminus X_{n_0+1})\\
& \subseteq & \phi(X_{n_0+1})\cup (X\setminus X_{n_0})\\
&\subsetneq& X\ ,
\end{eqnarray*}
which is a contradiction, since $\phi$ is surjective. Therefore, $\phi (X_{n+1}\setminus X_{n+2})= X_n\setminus X_{n+1}$ for all $n\in\mathbb Z_+$.

Finally we show that (3) $\Rightarrow$ (2). We assume that $\phi(X\setminus X_{1})= X\setminus X_{0}$ and $\phi(X_{n+1}\setminus X_{n+2})=X_{n}\setminus X_{n+1}$, for all $n\in\mathbb Z_+$. Then, $X_0\subsetneq X$. Indeed, if $X_0=X$, then $\mathcal I\equiv\{0\}$ which is a contradiction. If $n>1$, we have that
\begin{eqnarray*}
\phi(X\setminus X_n)&=&\phi\left[(X\setminus X_1)\cup(X_1\setminus X_2)\cup\dots\cup(X_{n-1}\setminus X_n)\right]\\
&=&\phi(X\setminus X_1)\cup\phi(X_1\setminus X_2)\cup\dots\cup\phi(X_{n-1}\setminus X_n)\\
&=&(X\setminus X_0)\cup(X_0\setminus X_1)\cup\dots\cup(X_{n-2}\setminus X_{n-1})\\
&=&X\setminus X_{n-1} \ ,
\end{eqnarray*}
and hence, $\phi^n(X\setminus X_n)=X\setminus X_{0}$, for all $n\in\mathbb Z_+$.
\end{proof}


By Theorem \ref{lau}, if $\mathcal I\sim\{X_n\}_{n=0}^{\infty}$ is an ideal of $C_0(X)\times_\phi\mathbb Z_+$ with left approximate unit, then $X_{n+1}= X_n$ or $X_{n+1}\subsetneq X_n$ for all $n\in\mathbb Z_+$. If $\mathcal I\sim\{X_n\}_{n=0}^{\infty}$ and $X_{n+1}= X_n$, for all $n\in\mathbb Z_+$, we will write $\mathcal I\sim\{X_0\}$. We obtain the following characterization.


\begin{corollary}\label{lau2}
Let $\mathcal I\sim \{X_0\}$ be a non-zero ideal of $C_0(X)\times_\phi\mathbb Z_+$. The following are equivalent:
\begin{enumerate}
\item $\mathcal I$ has a left approximate unit.

\item $\phi(X_0)= X_0$ and $\phi(X\setminus X_0)= X\setminus X_0$.
\end{enumerate}
\end{corollary}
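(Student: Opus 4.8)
The plan is to reduce everything to Theorem \ref{lau} by specializing its conditions to the case $X_n=X_0$ for all $n$. Since the corollary concerns an ideal $\mathcal I\sim\{X_0\}$, I may substitute $X_n=X_0$ throughout and simply read off what conditions (2) and (3) of Theorem \ref{lau} become. The one preliminary fact I would record is that \eqref{*} forces $\phi(X_0)\subseteq X_0$: taking $X_{n+1}=X_n=X_0$ in $X_{n+1}\cup\phi(X_{n+1})\subseteq X_n$ gives exactly $\phi(X_0)\subseteq X_0$. I would also use that, because $\mathcal I$ is non-zero, necessarily $X_0\subsetneq X$ (otherwise $E_n(A)(X)=\{0\}$ for every $A\in\mathcal I$ and all $n$, forcing every Fourier coefficient to vanish and hence $\mathcal I=\{0\}$).

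For $(1)\Rightarrow(2)$, I would invoke Theorem \ref{lau} to pass from the existence of a left approximate unit to condition (2) of that theorem, namely $\phi^n(X\setminus X_n)=X\setminus X_0$ for all $n$. With $X_n=X_0$ this reads $\phi^n(X\setminus X_0)=X\setminus X_0$ for all $n$; taking $n=1$ yields the first half of what I want, $\phi(X\setminus X_0)=X\setminus X_0$. To obtain $\phi(X_0)=X_0$, I would apply surjectivity of $\phi$: writing
\begin{equation*}
X=\phi(X)=\phi(X_0)\cup\phi(X\setminus X_0)=\phi(X_0)\cup(X\setminus X_0)
\end{equation*}
forces $X_0\subseteq\phi(X_0)$, and this combines with the containment $\phi(X_0)\subseteq X_0$ recorded above to give equality.

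For $(2)\Rightarrow(1)$, I would start from $\phi(X_0)=X_0$ and $\phi(X\setminus X_0)=X\setminus X_0$ and establish condition (2) of Theorem \ref{lau} in the specialized form $\phi^n(X\setminus X_0)=X\setminus X_0$ for all $n$. This follows by an immediate induction from the single equality $\phi(X\setminus X_0)=X\setminus X_0$. Together with $X_0\subsetneq X$ (which holds since $\mathcal I\neq\{0\}$), this is precisely condition (2) of Theorem \ref{lau}, so $\mathcal I$ has a left approximate unit.

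I do not expect any serious obstacle, since the substance is already contained in Theorem \ref{lau}. The only genuine step is the implication $\phi(X\setminus X_0)=X\setminus X_0\Rightarrow\phi(X_0)=X_0$, carried out by the surjectivity computation above. It is worth noting that, given the standing assumptions (surjectivity of $\phi$ together with \eqref{*}), the clause $\phi(X_0)=X_0$ in condition (2) is in fact redundant; stating both equalities is the natural symmetric formulation, and the surjectivity argument is exactly what exhibits the redundancy.
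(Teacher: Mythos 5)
Your proof is correct and follows essentially the same route as the paper: specialize Theorem \ref{lau} to $X_n=X_0$, extract $\phi(X\setminus X_0)=X\setminus X_0$, and combine $\phi(X_0)\subseteq X_0$ from \eqref{*} with surjectivity of $\phi$ to get $\phi(X_0)=X_0$. You are in fact slightly more complete than the paper, which leaves the $(2)\Rightarrow(1)$ direction (the induction giving $\phi^n(X\setminus X_0)=X\setminus X_0$ and the observation that $\mathcal I\neq\{0\}$ forces $X_0\subsetneq X$) implicit; your remark that the clause $\phi(X_0)=X_0$ is redundant under the standing hypotheses is also accurate.
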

\begin{proof}
By Theorem \ref{lau} we have $\phi(X\setminus X_0)= X\setminus X_0$. By (\ref{*}) we have $\phi(X_0)\subseteq X_0$ and since $\phi$ is surjective we get $\phi(X_0)= X_0$.
\end{proof}


In the following proposition the ideals $\mathcal I\sim\{X_n\}_{n=1}^\infty$ of $C_0(X)\times_\phi\mathbb Z_+$ with left approximate unit are characterized, when $\phi$ is a homeomorphism.


\begin{proposition}
Let $\mathcal I\sim\{X_n\}_{n=1}^\infty$ be a non-zero ideal of $C_0(X)\times_\phi\mathbb Z_+$, where $\phi$ is a homeomorphism. The following are equivalent:
\begin{enumerate}
\item $\mathcal I$ has a left approximate unit.

\item There exist $S,W\subsetneq X$ such that $S$ is closed and $\phi(S)=S$, the sets $\phi^{-1}(W),\phi^{-2}(W),\dots$ are pairwise disjoint and $\phi^k(W)\cap S=\emptyset$, for all $k\in\mathbb Z$, and
\begin{equation*}
X_n=S\cup(\cup_{k=n}^{\infty}\phi^{-k}(W))\ ,
\end{equation*}
for all $n\in\mathbb Z_+$.
\end{enumerate}
\end{proposition}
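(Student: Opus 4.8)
The plan is to read off the existence of a left approximate unit from Theorem \ref{lau}, and then to use the fact that $\phi$ is a homeomorphism to put condition (2) of that theorem into the closed form asserted here. The key simplification is that when $\phi$ is a homeomorphism each $\phi^{n}$ is a bijection of $X$, so $\phi^{n}(X\setminus X_{n})=X\setminus\phi^{n}(X_{n})$; hence the relation $\phi^{n}(X\setminus X_{n})=X\setminus X_{0}$ from Theorem \ref{lau} is equivalent to $X_{n}=\phi^{-n}(X_{0})$ for every $n$. Thus, once a left approximate unit is present, the entire ideal is controlled by the single set $X_{0}$ together with its backward orbit, and the whole proposition becomes a matter of describing $X_{0}$ via an invariant core $S$ and a single ``shell'' $W$.

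For (1) $\Rightarrow$ (2) I would first apply Theorem \ref{lau} to get $X_{0}\subsetneq X$ and $X_{n}=\phi^{-n}(X_{0})$, and observe that the sequence is nested (this follows from \eqref{*}, equivalently from the remark after Theorem \ref{lau}, giving $\phi^{-1}(X_{0})\subseteq X_{0}$). I would then set $S=\bigcap_{n\ge0}X_{n}$ and $W=X_{0}\setminus X_{1}$. The set $S$ is closed as an intersection of closed sets, and $\phi(S)=S$ because $\phi^{-1}(S)=\bigcap_{n\ge1}X_{n}=S$, the sequence being decreasing. The identity $X_{k}\setminus X_{k+1}=\phi^{-k}(X_{0})\setminus\phi^{-k}(X_{1})=\phi^{-k}(W)$ exhibits the successive shells of the nested sequence as the backward iterates of $W$; these shells are pairwise disjoint, which yields the disjointness of $\phi^{-1}(W),\phi^{-2}(W),\dots$, while $\phi^{k}(W)\cap S=\emptyset$ for all $k\in\mathbb Z$ follows from $W\cap S=\emptyset$ together with $\phi^{k}(S)=S$. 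Decomposing the decreasing sequence into its shells and its intersection then gives $X_{n}=S\cup\bigcup_{k\ge n}\phi^{-k}(W)$, and $S,W\subseteq X_{0}\subsetneq X$ gives $S,W\subsetneq X$.

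For (2) $\Rightarrow$ (1) I would reverse this computation: using $\phi^{-n}(S)=S$ one checks directly from the displayed formula that $X_{n}=\phi^{-n}(X_{0})$, whence $\phi^{n}(X\setminus X_{n})=\phi^{n}(\phi^{-n}(X\setminus X_{0}))=X\setminus X_{0}$, and then Theorem \ref{lau} applies. The step I expect to be the main obstacle is verifying the remaining hypothesis $X_{0}\subsetneq X$ of Theorem \ref{lau}, since this is \emph{not} a formal consequence of $S,W\subsetneq X$. I would argue by contradiction: if $X_{0}=X$, then, since each $\phi^{-k}(W)$ is disjoint from $S$, one has $X\setminus S=\bigcup_{k\ge0}\phi^{-k}(W)$; the $\phi^{-1}$-invariance of $X\setminus S$ (coming from $\phi(S)=S$) then gives $X\setminus S=\bigcup_{k\ge1}\phi^{-k}(W)$, which is a \emph{disjoint} union by the hypothesis on the backward iterates of $W$. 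Applying $\phi^{-1}$ once more yields $X\setminus S=\bigcup_{k\ge2}\phi^{-k}(W)$, and comparing the two disjoint decompositions forces $\phi^{-1}(W)=\emptyset$, hence $W=\emptyset$ and $X=S$, contradicting $S\subsetneq X$. This is precisely the point at which the pairwise-disjointness assumption is used in an essential way, and it is the only place where the argument is not a routine transcription of Theorem \ref{lau}.
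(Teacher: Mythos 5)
Your proof is correct, and its skeleton coincides with the paper's: both directions reduce to condition (2) of Theorem \ref{lau}, and in (1) $\Rightarrow$ (2) you choose exactly the same witnesses $S=\bigcap_{n}X_n$ and $W=X_0\setminus X_1$ and verify the same list of properties. Two of your local arguments differ, and both are improvements in clarity. First, you prove $\phi(S)=S$ by the direct computation $\phi^{-1}(S)=\bigcap_{n\ge 1}X_n=S$ (available because your organizing identity $X_n=\phi^{-n}(X_0)$ gives $\phi^{-1}(X_n)=X_{n+1}$ and the sequence is decreasing by \eqref{*}), whereas the paper argues by contradiction, using surjectivity to produce a point outside $S$ whose forward orbit enters $S$ and then invoking the dynamics of Theorem \ref{lau}. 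Second, and more substantively, for (2) $\Rightarrow$ (1) the paper only asserts that the hypothesis ``implies the second condition of Theorem \ref{lau}'' and calls the implication immediate; but that condition includes $X_0\subsetneq X$, which is not a formal consequence of $S,W\subsetneq X$. Your contradiction argument --- if $X_0=X$ then $X\setminus S=\bigcup_{k\ge 1}\phi^{-k}(W)=\bigcup_{k\ge 2}\phi^{-k}(W)$ as disjoint unions, forcing $\phi^{-1}(W)=\emptyset$, hence $W=\emptyset$ and $S=X$ --- supplies precisely the verification the paper omits, and it is indeed the only point in that direction where the pairwise disjointness of the sets $\phi^{-k}(W)$ is needed.
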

\begin{proof}
The second condition implies the second condition of Theorem \ref{lau} and hence the implication (2) $\Rightarrow$ (1) is immediate. We will prove the implication (1) $\Rightarrow$ (2).

We set $S=\cap_{n=0}^\infty X_n$. Clearly the set $S$ is closed and, by (\ref{*}), we have $\phi(S)\subseteq S$. We will prove that $\phi(S)=S$. We suppose $\phi(S)\subsetneq S$. Since $\phi$ is surjective, there exists $x\in X\setminus S$ such that $\phi(x)\in S$. Moreover, $\phi^n(x)\in S$ for all $n\ge 1$. However, since $x\notin S$ there exists $n_0$ such that $x\notin X_{n_0}$ and hence $\phi^{n_0}(x)\in X\setminus X_0$, by Theorem \ref{lau}, which is a contradiction since $S\cap (X\setminus X_0)=\emptyset$. 

By Theorem \ref{lau}, $\phi (X_{n+1}\setminus X_{n+2})= X_n\setminus X_{n+1}$ for all $n\in\mathbb Z_+$ and hence $\phi^n (X_{n}\setminus X_{n+1})= X_0\setminus X_{1}$ or equivalently $ X_n\setminus X_{n+1}=\phi^{-n} (X_{0}\setminus X_{1})$ since $\phi$ is a homeomorphism. Furthermore, the sets $\phi^{-1}(X_0\setminus X_1),\phi^{-2}(X_0\setminus X_1),\dots$ are pairwise disjoint.

We set $W=X_0\setminus X_1$. Clearly, $\phi^k(W)\cap S=\emptyset$ for all $k\in\mathbb Z$, since $\phi(S)=S$ and $\phi(W)\subseteq X\setminus X_0$. Also, $X_0=S\cup(X_0\setminus X_1)\cup (X_1\setminus X_2)\cup\dots$ and hence
\begin{equation*}
X_0=S\cup(\cup_{k=0}^{\infty}\phi^{-k}(W))\ .
\end{equation*}
Finally, for all $n\in\mathbb Z_+$ we have that
\begin{equation*}
X_0=X_n\cup(\cup_{k=1}^n X_{k-1}\setminus X_k) = X_n\cup(\cup_{k=1}^n \phi^{-k+1}(W))= X_n\cup(\cup_{k=0}^{n-1} \phi^{-k}(W))\ ,
\end{equation*}
and so
\begin{equation*}
X_n=X_0\setminus(\cup_{k=0}^{n-1} \phi^{-k}(W)) =S\cup(\cup_{k=n}^{\infty} \phi^{-k}(W))\ .
\end{equation*}
\end{proof}


In the following corollary the ideals with an approximate unit are characterized.


\begin{corollary}\label{au}
Let $\mathcal I\sim\{X_n\}_{n=0}^{\infty}$ be a non-zero ideal of $C_0(X)\times_\phi\mathbb Z_+$. The following are equivalent:
\begin{enumerate}
\item $\mathcal I$ has an approximate unit.
\item $X_n=X_{n+1}$, for all $n\in\mathbb Z_+$, and $\phi(X\setminus X_0)= X\setminus X_0$.
\end{enumerate}
\end{corollary}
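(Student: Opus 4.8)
The plan is to read off the corollary from the two preceding characterizations, using only the definition that an approximate unit is a net which is \emph{simultaneously} a left and a right approximate unit. No new estimate is required: the argument combines Theorem \ref{rau}, Theorem \ref{lau} and Corollary \ref{lau2}, and the decisive observation is that the explicit net produced in all of these statements is one and the same.

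For (1)$\Rightarrow$(2), I would suppose $\mathcal I$ has an approximate unit $\{V_\lambda\}_{\lambda\in\Lambda}$. Being in particular a right approximate unit, Theorem \ref{rau} forces $X_n=X_{n+1}$ for all $n\in\mathbb Z_+$, so that $\mathcal I\sim\{X_0\}$. Being also a left approximate unit, Corollary \ref{lau2} now applies and yields $\phi(X_0)=X_0$ and $\phi(X\setminus X_0)=X\setminus X_0$. The equalities $X_n=X_{n+1}$ and $\phi(X\setminus X_0)=X\setminus X_0$ are exactly the two assertions of (2).

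For (2)$\Rightarrow$(1), I would assume $X_n=X_{n+1}$ for all $n$ and $\phi(X\setminus X_0)=X\setminus X_0$. Since $\mathcal I\neq\{0\}$ we have $X_0\subsetneq X$, for otherwise every Fourier coefficient of every element of $\mathcal I$ would vanish identically and $\mathcal I$ would be $\{0\}$. From \eqref{*} we get $\phi(X_0)\subseteq X_0$, and this together with $\phi(X\setminus X_0)=X\setminus X_0$ and the surjectivity of $\phi$ forces $\phi(X_0)=X_0$; hence condition (2) of Corollary \ref{lau2} holds, while $X_n=X_{n+1}$ is condition (2) of Theorem \ref{rau}. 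Now I would fix a single contractive approximate unit $\{u_\lambda\}_{\lambda\in\Lambda}$ of the ideal $C_0(X\setminus X_0)$ of $C_0(X)$ and set $V_\lambda=U^0u_\lambda$. By the proof of Theorem \ref{rau} the net $\{V_\lambda\}_{\lambda\in\Lambda}$ is a right approximate unit of $\mathcal I$, and by the proof of Corollary \ref{lau2} (i.e.\ Theorem \ref{lau}) the very same net is a left approximate unit of $\mathcal I$; therefore it is an approximate unit and (1) holds.

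I expect the proof to be essentially immediate, the only point deserving care being that one should not route the converse through the general Banach-algebra theorem that a bounded left approximate identity and a bounded right approximate identity can be merged into a two-sided one. That theorem would a priori supply two different nets and demand an extra merging argument; the economical observation that the single net $\{U^0u_\lambda\}$ already serves both purposes is what makes the equivalence drop out at once.
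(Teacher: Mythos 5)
Your proof is correct and follows essentially the same route as the paper, which likewise derives $\phi(X_0)=X_0$ from \eqref{*} and surjectivity and then invokes Theorem \ref{rau} and Corollary \ref{lau2}. Your explicit observation that the single net $\{U^0u_\lambda\}$ constructed in both proofs serves simultaneously as a left and a right approximate unit is exactly the point the paper's terse citation relies on implicitly, so no further merging argument is needed.
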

\begin{proof}
(1) $\Rightarrow$ (2) is immediate from Theorem \ref{rau} and Corollary \ref{lau2}. 

We show (2) $\Rightarrow$ (1). If $X_n=X_{n+1}$, by (\ref{*}), we have $\phi(X_0)\subseteq X_0$. Since $\phi(X\setminus X_0)= X\setminus X_0$ and $\phi$ surjective we have $\phi(X_0)= X_0$. Theorem \ref{rau} and Corollary \ref{lau2} conclude the proof.
\end{proof}


\begin{remark}
If $\mathcal I\sim\{X_n\}_{n=0}^{\infty}$ is an ideal of $C_0(X)\times_\phi\mathbb Z_+$ with a left (resp. right) approximate unit, then it has a contractive left (resp. right) approximate unit. Moreover, the semicrossed product $C_0(X)\times_\phi\mathbb Z_+$ has a contractive approximate unit.
\end{remark}


Let $B$ be a Banach space and $C$ be a subspace of $B$. The set of linear functionals that vanish on a subspace $C$ of $B$ is called the \emph{annihilator} of $C$. A subspace $C$ of a Banach space $B$ is an \emph{$M$-ideal} in $B$ if its annihilator is the kernel of a projection $P$ on $B^*$ such that $\|y\|=\|P(y)\|+\|y-P(y)\|$, for all $y$, where $B^*$ is the dual space of $B$.

Effros and Ruan proved that the $M$-ideals in a unital operator algebra are the closed two-sided ideals with an approximate unit, \cite[Theorem 2.2]{er}. Therefore, we obtain the following corollary about the $M$-ideals of a semicrossed product. 


\begin{corollary}\label{cau}
Let $\mathcal I\sim\{X_n\}_{n=0}^{\infty}$ be a non-zero ideal of $C_0(X)\times_\phi\mathbb Z_+$, where $X$ is compact. The following are equivalent:
\begin{enumerate}
\item $\mathcal I$ is $M$-ideal.
\item $\mathcal I$ has an approximate unit.
\item $X_n=X_{n+1}$, for all $n\in\mathbb Z_+$, and $\phi(X\setminus X_0)= X\setminus X_0$.
\end{enumerate}
\end{corollary}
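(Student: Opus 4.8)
The plan is to reduce this corollary to results already established in the excerpt, namely Corollary \ref{au} together with the Effros--Ruan theorem cited just before the statement. The equivalence of (2) and (3) requires no new work at all: it is precisely the content of Corollary \ref{au}, which is stated for an arbitrary locally compact Hausdorff space $X$ and therefore applies in particular when $X$ is compact. So I would dispose of that equivalence in a single line by citing Corollary \ref{au}.

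The substantive task is the equivalence of (1) and (2), and here the compactness hypothesis on $X$ is exactly what makes the cited theorem applicable. The Effros--Ruan theorem characterizes the $M$-ideals of a \emph{unital} operator algebra as its closed two-sided ideals with an approximate unit, so the first step I would carry out is to observe that when $X$ is compact the semicrossed product $C_0(X)\times_\phi\mathbb Z_+=C(X)\times_\phi\mathbb Z_+$ is unital. Indeed, the constant function $1$ now lies in $C(X)$, and since $\alpha^m(1)=1\circ\phi^m=1$, the multiplication rule $(U^nf)(U^mg)=U^{n+m}(\alpha^m(f)g)$ gives $(U^0 1)(U^nf)=U^n f$ and $(U^nf)(U^0 1)=U^n f$ on generators; extending by linearity and continuity shows $U^0 1$ is a two-sided identity.

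With $C(X)\times_\phi\mathbb Z_+$ identified as a unital operator algebra, I would then invoke the Effros--Ruan theorem directly: since $\mathcal I$ is by hypothesis a closed two-sided ideal, being an $M$-ideal is equivalent to possessing an approximate unit, which is (1) $\Leftrightarrow$ (2). Combining this with (2) $\Leftrightarrow$ (3) from Corollary \ref{au} closes the cycle.

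I do not anticipate a genuine obstacle in this argument; it is essentially a bookkeeping combination of two earlier facts. The only point that genuinely needs verification — and the one place where the hypothesis that $X$ is compact (rather than merely locally compact) is used — is the unitality of $C(X)\times_\phi\mathbb Z_+$, without which the Effros--Ruan theorem could not be applied. Everything else follows formally.
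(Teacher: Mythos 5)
Your proposal is correct and follows exactly the route the paper intends: the paper states this corollary without a written proof, deriving it directly from the Effros--Ruan theorem (for (1) $\Leftrightarrow$ (2)) and Corollary \ref{au} (for (2) $\Leftrightarrow$ (3)). Your explicit verification that $U^0 1$ is a two-sided unit of $C(X)\times_\phi\mathbb Z_+$ when $X$ is compact is the one detail the paper leaves implicit, and you have it right.
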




\bibliographystyle{amsplain}

\end{document}